\newcommand{\Exp}{{\rm I\hspace{-0.8mm}E}}
\newcommand{\Prob}{{\rm I\hspace{-0.8mm}P}}
\newcommand{\iz}{{\rm \rlap Z\kern 2.2pt Z}}
\newcommand{\ind}{{1\hspace{-1mm}{\rm I}}}
\newtheorem{theorem}{Theorem}
\newtheorem{proposition}{Proposition}
\newtheorem{remark}{Remark}
\newtheorem{example}{Example}
\title{{\bf  Formula for the supremum distribution 
of a spectrally positive L\'evy process}}
\author{\\
Zbigniew Michna\\
Department of Mathematics and Cybernetics\\
Wroc{\l}aw University of Economics\\
Wroc{\l}aw Poland}
\date{}
\begin{document}

\baselineskip=17pt

\maketitle

\begin{abstract}
In this article we derive formula for probability $\Prob(\sup_{t\leq T} (X(t)-ct)>u)$
where $X=\{X(t)\}$ is a spectrally positive L\'evy process and
$c\in\mathbb{R}$.
As an example we investigate the inverse Gaussian L\'evy process.

\vspace{5mm}
{\it Keywords: L\'evy process, distribution of the supremum of a stochastic process,
finite time ruin probability, inverse Gaussian L\'evy process, tempered stable L\'evy process}
\newline
\vspace{2cm}
MSC(2010): Primary 60G51; Secondary 60G70.
\end{abstract}

\section{Introduction} 
L\'evy processes arise in many areas of probability and play an important role among stochastic processes. Moreover they
are a basic skeleton in many applications in finance, insurance, queueing systems and engineering.
Especially extreme values of stochastic processes are applicable in many fields. 
In this article we will consider spectrally positive L\'evy processes
that is L\'evy processes with L\'evy measure concentrated on $(0,\infty)$. Let $X=\{X(t): t\geq 0\}$ be a L\'evy 
process with characteristic function of the form
\begin{equation}\label{chl}
\Exp\exp\{iuX(t)\}=\exp\left\{t\left[iau+\int_0^\infty(e^{iux}-1-iux\ind(x< 1))\,Q(dx) \right]\right\}
\end{equation}
where $a\in \mathbb{R}$ and $Q$ is a L\'evy measure (this is a pure jump L\'evy process without Gaussian component). We assume
that the L\'evy measure $Q$ has a density bounded on $[\epsilon,\infty)$ for every $\epsilon>0$.
We will investigate two cases that is the finite variation case and the compensated case. In the first case
we assume $\int_0^1 x\,Q(dx)<\infty$ and the characteristic function of $X$ we put 
\begin{equation}\label{chlf}
\Exp\exp\{iuX(t)\}=\exp\left\{t\left[\int_0^\infty(e^{iux}-1)\,Q(dx) \right]\right\}\,.
\end{equation}
In the compensation case we have $\int_0^1 x\,Q(dx)=\infty$.

Exact formulas for the supremum distribution of a stochastic process 
are known only in a few cases: Brownian motion, compound Poisson process, subordinators (see Tak\'acs \cite{ta:65}), gamma process (see Dickson and Waters \cite{di:wa:93}), stable L\'evy process (see Bernyk et al. \cite{be:da:pe:08} and Michna \cite{mi:11}). In Tak\'acs \cite{ta:65}
a generalization of the classical ballot theorem is provided to get the distribution of the supremum
for the processes with interchangeable non-negative increments.

In this article we will find the exact formula for the following probability 
\begin{equation}\label{psi}
\Psi(u,T)=\Prob(\sup_{t\leq T} (X(t)-ct)>u)
\end{equation}
where $T>0$, $u\geq 0$ and $c\in\mathbb{R}$. For $c>0$ probability (\ref{psi}) is the so-called finite time ruin probability. In the compensation case the drift $c$ can be included in the parameter $a$.
In the articles Dickson and Waters \cite{di:wa:93} and Michna \cite{mi:11} a special approximation of gamma process and $\alpha$-stable L\'evy process by compound Poisson processes is used. In Michna \cite{mi:11} central limit theorem is
applied to approximate $\alpha$-stable L\'evy processes. The method to derive the formula for the supremum used in this paper is similar to the method applied in Dickson and Waters \cite{di:wa:93} and Michna \cite{mi:11} but here we apply a general 
approximation of a L\'evy process by compound Poisson processes which requires different arguments in the proof and get formulas for the probability (\ref{psi}) which in the compensation case seem to be a new result.

We will use the results for the so-called finite time ruin probability for the classical risk process 
(the compound Poisson process). First let
$$
S(t)=\sum_{k=1}^{N(t)}Y_k -ct
$$ 
be the classical claim surplus process, where $\{Y_k\}_{k=1}^\infty$ is an iid sequence of positive random variables, $N(t)$ is a Poisson process and $c>0$.
The first formula below is going back to Cram\'er (see e.g. Asmussen and Albrecher\cite{as:00}) and the second one is the so-called Seal's formula (see Seal \cite{se:74}, originating from Prabhu \cite{pr:61}). Let
$\psi(u,T)=\Prob(\sup_{t\leq T} S(t)> u)$.
\begin{theorem}\label{psirisk} 
For $T>0$, $u>0$ and $c>0$ we have
\begin{enumerate}
\item[(i)]
$$
1-\psi(0,T)=\Prob(\sup_{t\leq T} S(t)= 0)
=\frac{1}{cT}\int_0^{cT}\Prob\left(\sum_{k=1}^{N(T)}Y_k\leq x\right)\,dx\,,
$$
\item[(ii)]
\begin{eqnarray*}
\lefteqn{1-\psi(u,T)=\Prob(\sup_{t\leq T} S(t)\leq u)}\\
&=&\Prob(S(T)\leq u)-c\int_0^{T}(1-\psi(0,T-s))
\Prob(S(s)\in [u,u+ds])\\
&=&\Prob(S(T)\leq u)-c\int_0^{T}(1-\psi(0,T-s))f(u+cs,s)\,ds\,,
\end{eqnarray*}
where $f(x,s)$ is the density function of $\sum_{k=1}^{N(s)}Y_k$, provided that it exists.
\end{enumerate}
\end{theorem}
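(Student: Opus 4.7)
I would prove both parts by reducing them to the generalized ballot theorem of Tak\'acs for processes with exchangeable increments. Conditionally on $N(T)$ and on the unordered jump sizes $Y_1,\dots,Y_{N(T)}$, the jump times of the Poisson process form a uniform order statistic on $[0,T]$, so the increments of $S$ across the induced partition of $[0,T]$ are exchangeable. This places us in the setting to which Tak\'acs' theorem applies.

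For part~(i), I first condition on $\sum_{k=1}^{N(T)} Y_k = y$. On the event $\{\sup_{t\leq T} S(t)\leq 0\}$ we have $y\leq cT$, and Tak\'acs' theorem yields the conditional probability
\[
\Prob\Bigl(\sup_{t\leq T} S(t)\leq 0 \;\Big|\; \sum_k Y_k = y\Bigr) = 1 - \frac{y}{cT}, \qquad 0\leq y\leq cT.
\]
Unconditioning gives $\int_0^{cT}(1-y/(cT))\,dF_T(y)$, where $F_T$ is the distribution function of $\sum_{k=1}^{N(T)}Y_k$. A single integration by parts converts this to $(cT)^{-1}\int_0^{cT} F_T(x)\,dx$, which is the claimed identity.

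For part~(ii), the idea is to decompose the complementary event $\{\sup_{t\leq T} S(t)>u,\; S(T)\leq u\}$ by the last visit of $S$ to level $u$. Because $S$ has only positive jumps and linearly decreases at rate $c$ between them, every downcrossing of $u$ is realised as a continuous descent; on the given event there is then a unique last time $L\in(0,T)$ with $S(L)=u$ and $S(t)<u$ on $(L,T]$. The infinitesimal probability of a downcrossing at time $s$ is $c\,f_{S(s)}(u)\,ds = c\,f(u+cs,s)\,ds$, where the identity $f_{S(s)}(u)=f(u+cs,s)$ follows from $S(s)=\sum_{k=1}^{N(s)}Y_k - cs$. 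Conditional on such a crossing at time $s$, the strong Markov property at $s$ says that the probability the process remains $\leq u$ on $[s,T]$ equals $1-\psi(0,T-s)$. Integrating yields
\[
\Prob\bigl(\sup_{t\leq T} S(t)>u,\; S(T)\leq u\bigr) = c\int_0^T \bigl(1-\psi(0,T-s)\bigr) f(u+cs,s)\,ds,
\]
and subtracting from $\Prob(S(T)\leq u)$ gives the second formula.

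The main obstacle is the rigorous justification of the last-downcrossing decomposition in part~(ii). The factor $c\,f(u+cs,s)\,ds$ is the intensity of \emph{all} downcrossings of $u$ at time $s$, whereas the factor $1-\psi(0,T-s)$ is what restricts to the \emph{last} one by forbidding further upcrossings on $(s,T]$. Making this disintegration precise calls for either a Palm-calculus argument (viewing the downcrossing times as a point process with the stated intensity) or a reduction to the ballot setting by conditioning on $(N(T),Y_1,\dots,Y_{N(T)})$ and applying Fubini; the latter route is closer in spirit to part~(i) but combinatorially more involved because of the non-trivial intercept $u$.
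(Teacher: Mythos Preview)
The paper does not prove this theorem at all: it is quoted as a classical result, with part~(i) attributed to Cram\'er (via Asmussen--Albrecher) and part~(ii) to Seal and Prabhu. So there is no ``paper's own proof'' to compare against; Theorem~\ref{psirisk} is used as input for the approximation argument in Theorem~\ref{main}.

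That said, your sketch is a reasonable outline of how these classical identities are obtained. For part~(i), conditioning on $N(T)$ and the unordered jump sizes to reduce to the exchangeable-increments ballot theorem is exactly the standard route, and your integration by parts is correct. For part~(ii), the last-downcrossing decomposition is indeed the idea behind Seal's formula; your identification of the gap is honest and accurate. The cleanest way to make it rigorous is not Palm calculus but a time-reversal (duality) argument: the reversed process $\widetilde S(t)=S(T)-S(T-t)$ has the same law as $S$, and the last downcrossing of level $u$ by $S$ before time $T$ corresponds to the first passage of $\widetilde S$ to level $S(T)-u$, which converts the ``last'' into a ``first'' and lets you apply the strong Markov property directly. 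This is how Prabhu's original derivation proceeds and avoids the combinatorial detour you mention.
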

\begin{remark}
A compound Poisson distribution has an atom at $0$ thus the above function $f$ is a density of the positive part of the distribution support. 
\end{remark}

We will apply approximation of a L\'evy process $X$ by compound Poisson processes.
Let us put 
$$
N_\epsilon(t)=\sum_{s\leq t}\Delta X(s)\ind(\Delta X(s)\geq \epsilon)\,.
$$
The process $N_\epsilon$ is a compound Poisson process with positive jumps.
\begin{proposition}\label{weak}
Let $X$ be a spectrally positive L\'evy process with a L\'evy measure $Q$. Then
\begin{enumerate}
\item[(i)]
for $\int_0^1 x\,Q(dx)<\infty$ 
\begin{equation}\label{sn1}
N_\epsilon(t)\rightarrow X(t)\,;
\end{equation}
\item[(ii)]
for $\int_0^1 x\,Q(dx)=\infty$
\begin{equation}\label{sn2}
N_\epsilon(t)+(a-\int_{\epsilon}^1x\,Q(dx))t\rightarrow X(t)\,,
\end{equation}
\end{enumerate}
as $\epsilon\downarrow 0$ a.s. in the uniform topology.
\end{proposition}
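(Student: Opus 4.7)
\textbf{Case (i).} The characteristic function \refs{chlf} identifies $X$ as a subordinator with no drift, so pathwise $X(t)=\sum_{s\le t}\Delta X(s)$, the series converging absolutely because $\int_0^1 x\,Q(dx)<\infty$ and $Q([1,\infty))<\infty$. Then
$$X(t)-N_\epsilon(t)=\sum_{s\le t}\Delta X(s)\,\ind(\Delta X(s)<\epsilon)\ge 0$$
is nondecreasing in $t$ and nonincreasing as $\epsilon\downarrow 0$. Non-negativity and monotonicity in $t$ yield
$$\sup_{t\le T}\bigl(X(t)-N_\epsilon(t)\bigr)=X(T)-N_\epsilon(T),$$
and this tends to $0$ a.s.\ as $\epsilon\downarrow 0$ by monotone convergence applied to the pointwise convergent sum at $t=T$. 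That is precisely a.s.\ uniform convergence on $[0,T]$.

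\textbf{Case (ii).} The plan is to invoke the L\'evy-It\^o decomposition associated to \refs{chl}: write $X(t)=at+Y(t)+Z(t)$, where $Y(t)=\sum_{s\le t}\Delta X(s)\,\ind(\Delta X(s)\ge 1)$ is the compound Poisson component of large jumps, and
$$Z_\epsilon(t):=\sum_{s\le t}\Delta X(s)\,\ind(\epsilon\le\Delta X(s)<1)-t\int_\epsilon^1 x\,Q(dx),\qquad Z(t):=\lim_{\epsilon\downarrow 0}Z_\epsilon(t)$$
is the square-integrable martingale component. Direct bookkeeping of which jumps are placed in $N_\epsilon$ versus in $Y$ and $Z_\epsilon$ gives the clean identity
$$N_\epsilon(t)+\Bigl(a-\int_\epsilon^1 x\,Q(dx)\Bigr)t-X(t)=Z_\epsilon(t)-Z(t),$$
so the proposition reduces to showing $Z_\epsilon\to Z$ locally uniformly a.s.

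\textbf{Main step / main obstacle.} The delicate point is the uniform (in $t$) control of the compensated small-jump process. For $\epsilon'<\epsilon\le 1$ the process $Z_{\epsilon'}-Z_\epsilon$ is a c\`adl\`ag mean-zero martingale with predictable quadratic variation $t\int_{\epsilon'}^\epsilon x^2\,Q(dx)$. Doob's $L^2$ maximal inequality yields
$$\Exp\sup_{t\le T}\bigl(Z_{\epsilon'}(t)-Z_\epsilon(t)\bigr)^2\le 4T\int_{\epsilon'}^\epsilon x^2\,Q(dx),$$
and the standard L\'evy-measure integrability $\int_0^1 x^2\,Q(dx)<\infty$ makes the right-hand side tend to $0$. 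Choosing a geometric subsequence $\epsilon_n\downarrow 0$ along which the bound is summable and invoking Borel-Cantelli produces a.s.\ uniform convergence of $Z_{\epsilon_n}$ on $[0,T]$ to a c\`adl\`ag limit, which is identified with $Z$ via the $L^2$ convergence. To upgrade from the subsequence to the full family $\epsilon\downarrow 0$, observe that for $\epsilon_{n+1}\le\epsilon\le\epsilon_n$ one has $|N_\epsilon-N_{\epsilon_n}|\le N_{\epsilon_{n+1}}-N_{\epsilon_n}$ pointwise in $t$ (only jumps with sizes in $[\epsilon_{n+1},\epsilon_n)$ are involved), and the compensator drift is monotone in $\epsilon$ as well; combining these two monotone bounds with the subsequence convergence gives the full uniform a.s.\ convergence. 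The finite-variation case is essentially trivial once the Lévy-It\^o picture is in place; the only real work is the Doob-plus-Borel-Cantelli argument needed to push $L^2$ convergence up to a.s.\ uniform convergence.
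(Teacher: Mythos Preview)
Your proposal follows the same route as the paper --- the L\'evy--It\^o representation --- but the paper's own proof is a one-line citation to Sato, whereas you have essentially reproduced the standard proof of the decomposition. Case (i) is handled cleanly and correctly via monotonicity, and in Case (ii) your identification $N_\epsilon(t)+(a-\int_\epsilon^1 x\,Q(dx))t-X(t)=Z_\epsilon(t)-Z(t)$ together with the Doob/Borel--Cantelli argument along a subsequence is exactly the classical proof.

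There is, however, a small gap in your ``upgrade to the full family'' step in Case (ii). Your bound $|N_\epsilon-N_{\epsilon_n}|\le N_{\epsilon_{n+1}}-N_{\epsilon_n}$ is fine, and the compensator is indeed monotone, but combining them via the triangle inequality gives
\[
\sup_{t\le T}|Z_\epsilon(t)-Z_{\epsilon_n}(t)|\ \le\ \bigl(N_{\epsilon_{n+1}}(T)-N_{\epsilon_n}(T)\bigr)+T\!\int_{\epsilon_{n+1}}^{\epsilon_n} x\,Q(dx),
\]
and in the infinite-variation case neither summand need tend to $0$: for instance with $Q(dx)=x^{-2}\,dx$ on $(0,1)$ and a geometric subsequence $\epsilon_n=r^n$, the compensator increment equals $\log(1/r)$ for every $n$. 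So the monotone-sandwich argument that works perfectly in Case (i) does not close the gap here. This is a minor issue: the paper only ever uses Proposition~\ref{weak} along a fixed sequence $\epsilon_n\downarrow 0$ (see the proof of Theorem~\ref{main}), and the subsequence statement is all that is needed. If you want the genuine continuum limit, one clean fix is to note that almost surely there are only finitely many jump sizes in $[\epsilon_{n+1},\epsilon_n)$ on $[0,T]$, so $\epsilon\mapsto Z_\epsilon$ is piecewise controlled and the supremum over $\epsilon\in[\epsilon_{n+1},\epsilon_n]$ of $\sup_{t\le T}|Z_\epsilon-Z_{\epsilon_{n+1}}|$ can be bounded by a Doob-type maximal inequality applied to the finitely many intermediate martingales, yielding an $L^2$ bound that is again $O\bigl(\int_{\epsilon_{n+1}}^{\epsilon_n} x^2\,Q(dx)\bigr)$.
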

\begin{proof}
The assertions follow from L\'evy-It\^o representation see e.g. Sato \cite{sa:99}.
\end{proof}

\section{Main result}
Now we use the above results to derive a formula for probability (\ref{psi}) 
(for a similar treatment for gamma process see Dickson and Waters \cite{di:wa:93} and for $\alpha$-stable case Michna \cite{mi:11}).
\begin{theorem}\label{main}
Let $X$ be a spectrally positive L\'evy process, $T>0$ and $u>0$ then
\begin{enumerate}
\item[(i)]
for $\int_0^1 x\,Q(dx)<\infty$ and $c>0$
$$
\Psi(0,T)=\Prob(\sup_{t\leq T} (X(t)-ct)>0)=1-\frac{1}{cT}\int_0^{cT}\Prob(X(T)\leq x)\,dx
$$
and
\begin{eqnarray*}
\Psi(u,T)&=&\Prob(\sup_{t\leq T} (X(t)-ct)>u)\\
&=&\Prob(X(T)-cT > u)+\\
&&\int_0^T\frac{f(u+cs,s)}{T-s}
\,ds\int_{0}^{c(T-s)}\Prob(X(T-s)\leq x)\,dx\,,
\end{eqnarray*}
where $f(x,s)$ is the density function of $X(s)$, provided that it exists;
\item[(ii)]
for $\int_0^1 x\,Q(dx)=\infty$ and $a\in\mathbb{R}$ 
$$
\Psi(0,T)=\Prob(\sup_{t\leq T} X(t)>0)=1
$$
and
\begin{eqnarray}
\Psi(u,T)&=&\Prob(\sup_{t\leq T} X(t)>u)\nonumber\\
&=&\Prob(X(T) > u)+\nonumber\\
&&\int_0^T\frac{f(u,s)}{T-s}
\,ds\int_{-\infty}^{0}\Prob(X(T-s)\leq x)\,dx\,,\label{gensup}
\end{eqnarray}
where $f(x,s)$ is the density function of $X(s)$, provided that it exists and the cumulative distribution of $\sup_{t\leq T}X(t)$ is continuous.
\end{enumerate}
\end{theorem}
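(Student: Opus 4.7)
The plan is to use Proposition \ref{weak} to approximate $X$ by the compound Poisson processes $N_\epsilon$, apply the classical formulas of Theorem \ref{psirisk} to the approximating claim surplus processes, and pass to the limit $\epsilon\downarrow 0$.

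\textbf{Finite variation case.} Set $S_\epsilon(t)=N_\epsilon(t)-ct$ and $\psi_\epsilon(u,T)=\Prob(\sup_{t\le T}S_\epsilon(t)>u)$. Since $c>0$, Theorem \ref{psirisk} applies; combining parts (i) and (ii) and substituting the identity $c(1-\psi_\epsilon(0,T-s))=(T-s)^{-1}\int_0^{c(T-s)}\Prob(N_\epsilon(T-s)\le x)\,dx$ gives
\[
1-\psi_\epsilon(u,T)=\Prob(S_\epsilon(T)\le u)-\int_0^T\frac{f_\epsilon(u+cs,s)}{T-s}\int_0^{c(T-s)}\Prob(N_\epsilon(T-s)\le x)\,dx\,ds.
\]
By Proposition \ref{weak}(i), $N_\epsilon\to X$ uniformly a.s., so $\sup_{t\le T}S_\epsilon\to\sup_{t\le T}(X(t)-ct)$ a.s., and $\psi_\epsilon(u,T)\to\Psi(u,T)$ at continuity points of the sup-CDF. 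Inside the integral, bounded convergence handles the inner CDF-integral, and $f_\epsilon(u+cs,s)\to f(u+cs,s)$; the $\Psi(0,T)$ formula drops out of the $u=0$ version of Theorem \ref{psirisk}(i) in the same way.

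\textbf{Compensation case.} Put $c_\epsilon:=\int_\epsilon^1 x\,Q(dx)-a$. Since $\int_0^1 x\,Q(dx)=\infty$, we have $c_\epsilon\uparrow\infty$, so $c_\epsilon>0$ for small $\epsilon$, and Proposition \ref{weak}(ii) gives $S_\epsilon(t):=N_\epsilon(t)-c_\epsilon t\to X(t)$ uniformly a.s. Apply Theorem \ref{psirisk} to $N_\epsilon$ with drift $c_\epsilon$ in place of $c$ to obtain the same identity as above with $c$ replaced by $c_\epsilon$. The change of variables $x=y+c_\epsilon(T-s)$ rewrites the inner integral as $\int_{-c_\epsilon(T-s)}^{0}\Prob(S_\epsilon(T-s)\le y)\,dy$, which tends to $\int_{-\infty}^{0}\Prob(X(T-s)\le y)\,dy$ because $c_\epsilon(T-s)\to\infty$ and $S_\epsilon(T-s)\Rightarrow X(T-s)$; similarly $f_\epsilon(u+c_\epsilon s,s)$ is the density of $S_\epsilon(s)$ at $u$ and converges to $f(u,s)$. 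The assumed continuity of the sup-CDF yields $\psi_\epsilon(u,T)\to\Psi(u,T)$, producing (\ref{gensup}). The claim $\Psi(0,T)=1$ follows from the standard fact that a spectrally positive L\'evy process with $\int_0^1 x\,Q(dx)=\infty$ has infinite variation and therefore $\sup_{t\le T}X(t)>0$ a.s.\ for every $T>0$.

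The main technical obstacle will be justifying the interchange of limit and integral in the double integral. The singularity $(T-s)^{-1}$ is compensated by the inner integral being $O(T-s)$ as $s\to T$ (its domain of integration collapses and $S_\epsilon(T-s)\to 0$ in probability), but constructing an $\epsilon$-uniform majorant for dominated convergence needs care, particularly when $c_\epsilon\to\infty$ in the compensation case. Equally delicate is the pointwise convergence of the densities $f_\epsilon(\cdot,s)\to f(\cdot,s)$, since weak convergence of laws does not in general entail convergence of densities; here the bounded-density hypothesis on the L\'evy measure, together with the smoothing effect of the compound Poisson approximation, is what must ultimately carry the step.
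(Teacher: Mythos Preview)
Your proposal is correct and follows essentially the same route as the paper: approximate $X$ by the compound Poisson processes $N_\epsilon$ via Proposition~\ref{weak}, apply Theorem~\ref{psirisk} to each $N_\epsilon$, substitute the $u=0$ formula into the $u>0$ formula, and let $\epsilon\downarrow 0$. Two small points of comparison are worth noting. First, in case~(i) the paper exploits the monotonicity $N_{\epsilon_n}(t)\le N_{\epsilon_{n+1}}(t)$ along a decreasing sequence $\epsilon_n\downarrow 0$; this makes $\sup_{t\le T}(N_{\epsilon_n}(t)-ct)$ increase to $\sup_{t\le T}(X(t)-ct)$ a.s., so the CDFs converge for \emph{every} $u$, not just at continuity points --- a cleaner device than the bare uniform-convergence argument you give. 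Second, in case~(ii) the paper obtains $\Psi(0,T)=1$ directly from the approximating formula: after the change of variables you already wrote down, $1-\psi_\epsilon(0,T)=(c_\epsilon T)^{-1}\int_{-c_\epsilon T}^0\Prob(S_\epsilon(T)\le x)\,dx$, and since the integral tends to the finite quantity $\int_{-\infty}^0\Prob(X(T)\le x)\,dx$ while $c_\epsilon\to\infty$, the right-hand side tends to $0$; this is self-contained and avoids invoking an external ``standard fact''. As for the technical obstacles you flag (density convergence $f_\epsilon\to f$ and an $\epsilon$-uniform majorant for the double integral), the paper's proof passes over them at the same formal level you do, simply writing ``$\rightarrow$'' at the corresponding step.
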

\begin{remark}
The formula in (i) is similar to the formula of Tak\'acs \cite{ta:65} valid for the processes with interchangeable non-negative increments.
\end{remark}
\begin{remark}
The formula for $\Psi(u,T)$ can be unified for the both cases because in the first case the support of the distribution
of $X(t)$ is in $[0,\infty)$ and the formula (\ref{gensup}) is valid for the case (i) too.
\end{remark}
\begin{remark}
Formula (\ref{gensup}) is valid for Wiener process too (see Michna  \cite{mi:11}).
\end{remark}
\begin{remark}
In the case (i) the distribution of $\sup_{t\leq T}(X(t)-ct)$ has an atom at $u=0$.
\end{remark}
\begin{remark}
The case (i) for $c\leq 0$ is trivial.
\end{remark}
\begin{proof}
First we show the assertions of (i).
Let $\{\epsilon_n\}_{n=1}^\infty$ be a decreasing sequence of positive numbers such that $\lim_{n\rightarrow\infty}\epsilon_n=0$.
Then it is easy to notice that \\
$N_{\epsilon_n}(t)\leq N_{\epsilon_{n+1}}(t)$
for all $t\geq 0$ and $n$. Thus 
$$
\sup_{t\leq T}(N_{\epsilon_n}(t)-ct)\leq \sup_{t\leq T}(N_{\epsilon_{n+1}}(t)-ct)
$$
and
$$
\lim_{n\rightarrow\infty}\sup_{t\leq T}(N_{\epsilon_n}(t)-ct)=
\sup_{t\leq T}(X(t)-ct)\,.
$$
Hence for all $u\in\mathbb{R}$ we have
$$
\lim_{n\rightarrow\infty}\Prob(\sup_{t\leq T}(N_{\epsilon_n}(t)-ct)\leq u)=
\Prob(\sup_{t\leq T}(X(t)-ct)\leq u)\,.
$$
Using (i) of Theorem \ref{psirisk} we get for $c>0$ that
\begin{eqnarray}
1-\psi_\epsilon(0,T)&=&\Prob(\sup_{t\leq T} (N_\epsilon(t)-ct)\leq 0)\nonumber\\
&=&\frac{1}{cT}\int_0^{cT}
\Prob\left(N_\epsilon(T)\leq x\right)\,dx\label{psin}\\
&\rightarrow &\frac{1}{cT}\int_0^{cT}\Prob\left(X(T)\leq x\right)\,dx  \nonumber
\end{eqnarray}
where in the last line $\epsilon\downarrow 0$.
Thus we get the first assertion of (i).

Now let us consider (ii) of Theorem \ref{psirisk} for the process $N_\epsilon$. The random variable $N_\epsilon(t)$ has a density by the assumption that 
the L\'evy measure $Q$ has a bounded density on $[\epsilon,\infty)$. Thus we have for $c>0$ and $u>0$
\begin{eqnarray*}
\lefteqn{1-\psi_\epsilon(u,T)=\Prob(\sup_{t\leq T} (N_\epsilon(t)-ct)\leq u)}\\
&=&\Prob(N_\epsilon(T)-cT\leq u)-c\int_0^{T}(1-\psi_\epsilon(0,T-s))\,
\Prob(N_\epsilon(s)-cs\in [u,u+ds])\\
&=&\Prob(N_\epsilon(T)-cT\leq u)-\\
&&\,\,\,\,\,\,\int_0^{T}\frac{1}{T-s}\,\Prob(N_\epsilon(s)-cs\in [u,u+ds])\int_0^{c(T-s)}\Prob\left(N_\epsilon(T-s)\leq x\right)\,dx\,\\
&\rightarrow&\Prob(X(T)-cT\leq u)-\\
&&\,\,\,\,\,\,\int_0^{T}\frac{1}{T-s}\,\Prob(X(s)-cs\in [u,u+ds])\,\int_0^{c(T-s)}\Prob\left(X(T-s)\leq x\right)\,dx\,,
\end{eqnarray*}
where in the second last equality we substituted (\ref{psin}) and in the last one $\epsilon\downarrow 0$.
This way we obtain the second assertion of (i).

Now let us prove the assertions of (ii). We put $c_\epsilon=-a+\int_{\epsilon}^1x\,Q(dx)$ and
notice that $\lim_{\epsilon\rightarrow 0}c_\epsilon=\infty$. 
By the property (ii) of Proposition \ref{weak} we have
$$
\lim_{\epsilon\downarrow 0}\sup_{t\leq T}(N_{\epsilon}(t)-c_\epsilon t)=
\sup_{t\leq T}X(t)
$$
which implies
that 
$$
\lim_{\epsilon\downarrow 0}\Prob(\sup_{t\leq T}(N_{\epsilon}(t)-c_\epsilon t)\leq u)=
\Prob(\sup_{t\leq T}X(t)\leq u)\,
$$
for every $u\in\mathbb{R}$ where $\Prob(\sup_{t\leq T}X(t)\leq u)$ is a continuous function of $u$.
Thus for a sufficiently small $\epsilon$ we have
$c_\epsilon>0$ and we get
\begin{eqnarray}
1-\psi_\epsilon(0,T)&=&\Prob(\sup_{t\leq T} (N_\epsilon(t)-c_\epsilon t)\leq 0)\nonumber\\
&=&\frac{1}{c_\epsilon T}\int_0^{c_\epsilon T}
\Prob\left(N_\epsilon (T)\leq x\right)\,dx\nonumber\\
&=&\frac{1}{c_\epsilon T}\int_{-c_\epsilon T}^0
\Prob\left(N_\epsilon (T)-c_\epsilon T\leq x\right)\,dx\label{psin2}\\
&\rightarrow& 0 \nonumber
\end{eqnarray}
where in the last step $\epsilon\downarrow 0$ and the last integral tends to \\
$\int_{-\infty}^0\Prob\left(X(T)\leq x\right)\,dx$ which is finite by the fact the
L\'evy measure $Q$ is concentrated on $(0,\infty)$. For $u>0$ we obtain
\begin{eqnarray*}
\lefteqn{1-\psi_\epsilon(u,T)=\Prob(\sup_{t\leq T} (N_\epsilon(t)-c_\epsilon t)\leq u)}\\
&=&\Prob(N_\epsilon(T)-c_\epsilon T\leq u)-c_\epsilon\int_0^{T}(1-\psi_\epsilon(0,T-s))
\Prob(N_\epsilon(s)-c_\epsilon s\in [u,u+ds])\\
&=&\Prob(N_\epsilon(T)-c_\epsilon T\leq u)-\\
&&\,\,\,\,\,\int_0^{T}
\frac{1}{T-s}\,\Prob(N_\epsilon(s)-c_\epsilon s\in [u,u+ds])\cdot\\
&&\,\,\,\,\,\,\,\,\,\,\,\,\,\int_{-c_\epsilon(T-s)}^0
\Prob\left(N_\epsilon(T-s)-c_\epsilon (T-s)\leq x\right)\,dx\,\\
&\rightarrow&\Prob(X(T)\leq u)-\\
&&\,\,\,\,\,\int_0^{T}
\frac{f(u,s)}{T-s}\,ds\int_{-\infty}^0
\Prob\left(X(T-s)\leq x\right)\,dx\,,
\end{eqnarray*}
where in the second last equality we substituted (\ref{psin2}) and in the last one $\epsilon\downarrow 0$.
\end{proof}

We can give the following sufficient condition for the L\'evy process $X$ to have the cumulative distribution of $\sup_{t\leq T}(X(t)-ct)$ continuous.
\begin{proposition}
If $X$ is a L\'evy process such that 
\begin{equation}\label{limsup}
\limsup_{t\downarrow 0} \frac{X(t)}{t^{\kappa}}>0
\end{equation}
a.s. for $\kappa<1$ then the cumulative distribution of $\sup_{t\leq T}(X(t)-ct)$ is continuous.
\end{proposition}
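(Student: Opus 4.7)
The plan is to show the cumulative distribution function of $M_T := \sup_{t\leq T}(X(t)-ct)$ has no atoms, i.e., $\Prob(M_T = u) = 0$ for every $u \in \mathbb{R}$. Setting $Y(t) := X(t)-ct$ and noting $ct/t^\kappa = ct^{1-\kappa}\to 0$, the hypothesis transfers verbatim to $Y$, so the key fact I will use is that, for every $\delta > 0$,
\[
\Prob\bigl(Y(s) > 0 \text{ for some } s \in (0,\delta)\bigr) = 1.
\]

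First I dispose of the trivial cases $u\leq 0$: $M_T \geq Y(0) = 0$ handles $u < 0$, and the above display with $\delta = T$ gives $\Prob(M_T > 0) = 1$, which handles $u = 0$. For $u > 0$, I introduce the stopping time $\tau := \inf\{t\geq 0 : Y(t) \geq u\}$. By right-continuity of $Y$ together with the definition of $\tau$, on $\{\tau \leq T\}$ one has $Y(\tau) \geq u$; combined with $Y(\tau) \leq M_T = u$ on $\{M_T = u\}$, this pins $Y(\tau) = u$. I then split $\{M_T = u\}$ according to $\tau < T$ or $\tau = T$.

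On $\{M_T = u,\,\tau < T\}$ I apply the strong Markov property at $\tau$: the shifted process $\tilde Y(s) := Y(\tau+s) - u$ is independent of $\mathcal{F}_\tau$ and distributed as $Y$, and on this event $\tilde Y(s) \leq 0$ for every $s\in (0, T-\tau]$. Conditioning on $\tau = t < T$ and applying the key fact to $\tilde Y$ with $\delta = T-t$ shows the corresponding conditional probability is $0$, so $\Prob(M_T = u,\,\tau < T) = 0$. On $\{\tau = T\}$ I argue $Y(T-) < u \leq Y(T)$, i.e.\ $Y$ has a discontinuity at the deterministic time $T$; since $\Prob(Y(T-)\ne Y(T)) = 0$ for any L\'evy process, this slice also has probability $0$. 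Adding the two slices yields $\Prob(M_T = u) = 0$.

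The main delicacy will be the strong Markov step: one has to verify that $\tau$ is a genuine stopping time in the usual augmentation of the natural filtration (which it is, as the entry time of a c\`adl\`ag process into a closed set), and then convert the a.s.\ conditional contradiction produced by the key fact into an unconditional probability-zero statement for the joint event via Fubini. The remaining ingredients are elementary.
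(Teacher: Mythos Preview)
Your strategy coincides with the paper's: the $\limsup$ hypothesis forces $Y$ to enter $(0,\infty)$ immediately, killing any atom at $0$, and then the strong Markov property at first passage transports this to every level $u>0$. Your treatment is in fact more careful than the paper's very brief argument, which does not isolate the boundary case $\tau=T$ at all.

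Two points in your case analysis need attention, however. First, the split $\{M_T=u\}=\{M_T=u,\,\tau<T\}\cup\{M_T=u,\,\tau=T\}$ presupposes $\tau\le T$ on $\{M_T=u\}$, i.e.\ that the supremum is attained; for a L\'evy process with negative jumps one can have $M_T=u$ while $Y(t)<u$ for every $t\le T$ (the value $u$ reached only as a left limit preceding a downward jump), and this slice $\{M_T=u,\,\tau>T\}$ must be disposed of separately. In the spectrally positive setting of the paper it is automatically empty, since $Y(s)\ge Y(s-)$ forces the supremum to be attained. Second, on $\{\tau=T\}$ your assertion $Y(T-)<u$ is not justified: from $Y(t)<u$ for $t<T$ you only get $Y(T-)\le u$, so the sub-case $Y(T-)=u=Y(T)$ escapes your jump argument. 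This is easily repaired by observing that the hypothesis excludes compound Poisson plus drift (for such processes $X(t)/t^{\kappa}\to 0$ a.s.\ because $X$ is identically $bt$ up to its first jump time and $bt^{1-\kappa}\to 0$), whence $Y(T)$ has a continuous law and $\Prob(Y(T)=u)=0$. With these two patches your argument is complete.
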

\begin{remark}
Conditions for the property (\ref{limsup}) are given in Bertoin et al. \cite{be:do:ma:08}.
\end{remark}
\begin{proof}
By (\ref{limsup}) we get
$$
\limsup_{t\downarrow 0} \frac{X(t)-ct}{t^{\kappa}}>0
$$
a.s. which gives that the process $X(t)-ct$ is above the barrier $u=0$ infinitely
many times in every neighborhood of $t=0$ a.s. Thus 
\begin{equation}\label{supz}
\Prob(\sup_{t\leq T}(X(t)-ct)=0)=0\,.
\end{equation}
For $u>0$ we define the stopping time $\tau(u)=\inf\{t>0: X(t)>u\}$ and by the strong Markov property
we get that the process $Y(t)=X(\tau+t)-X(\tau)$ is a L\'evy process with the same distribution as
the process $X$ on the set $\{\tau<\infty\}$. Thus using (\ref{supz}) we obtain that
$$
\Prob(\sup_{t\leq T}(X(t)-ct)=u)=0\,.
$$
\end{proof}

For stable L\'evy processes the continuity of the cumulative distribution of \\
$\sup_{t\leq T}(X(t)-ct)$ can be shown using the law of the iterated logarithm for small times (see Zolotarev \cite{zo:64}).
Moreover Theorem \ref{main} can be applied to many other L\'evy processes such as the spectrally positive generalized hyperbolic processes (e.g. the inverse Gaussian L\'evy process) 
introduced by Barndorff-Nielsen and Halgareen \cite{ba:ha:77}, 
the spectrally positive L\'evy processes proposed by Carr et al. \cite {ca:ge:ma:yo:03} or tempered stable processes introduced by Koponen \cite{ko:95}
and generalized by Rosi\'nski \cite{ro:07}. Recently the inverse Gaussian
L\'evy process is extensively used in financial modeling. Thus let us derive the exact formula for the supremum distribution
of the inverse Gaussian process.

\begin{example}
Let us consider the inverse Gaussian process $X(t)$ with the following
density 
$$
f(x,t)=\frac{\delta t e^{\gamma\delta t}}{\sqrt{2\pi}}x^{-3/2}\exp\left\{-\frac{1}{2}(\delta^2 t^2 x^{-1}+\gamma^2 x)\right\}\ind\{x>0\}
$$
where $\delta>0$ and $\gamma>0$. The process $X$ is a subordinator. By Theorem \ref{main} (i) we have for $c>0$
\begin{eqnarray*}
\lefteqn{\Prob(\sup_{t\leq T}(X(t)-ct)> 0)}\\
&=&1-\frac{\delta e^{\gamma\delta T}}{c\sqrt{2\pi}}\int_0^{cT}dx\int_0^x y^{-3/2}\exp\left\{-\frac{1}{2}(\delta^2T^2y^{-1}+\gamma^2 y)\right\}\,dy\\
&=& 1-\frac{\delta e^{\gamma\delta T}}{c\sqrt{2\pi}}\int_0^{cT}(cT x^{-3/2}-x^{-1/2})\exp\left\{-\frac{1}{2}(\delta^2T^2x^{-1}+\gamma^2 x)\right\}\,dx\,,
\end{eqnarray*}
where in the last step we integrate by parts. For $u>0$ we consider the second assertion of Theorem \ref{main} (i) and similarly we obtain
\begin{eqnarray*}
\lefteqn{\Prob(\sup_{t\leq T}(X(t)-ct)> u)}\\
&=&\frac{\delta Te^{\gamma\delta T}}{\sqrt{2\pi}}\int_{u+cT}^{\infty}x^{-3/2}\exp\left\{-\frac{1}{2}(\delta^2 T^2x^{-1}+\gamma^2 x)\right\}\,dx\\
&&+\frac{\delta^2 e^{\gamma\delta T}}{2\pi}\int_0^{T}s(u+cs)^{-3/2}\exp\left\{-\frac{1}{2}(\delta^2s^2(u+cs)^{-1}+\gamma^2 (u+cs))\right\}\,ds\\
&&\,\,\,\,\,\,\cdot\int_0^{c(T-s)}(c(T-s) x^{-3/2}-x^{-1/2})\exp\left\{-\frac{1}{2}(\delta^2(T-s)^2x^{-1}+\gamma^2 x)\right\}\,dx\,.
\end{eqnarray*}
\end{example}

One can investigate a tempered stable L\'evy process $X$ with the L\'evy measure 
$$
Q(dx)=\frac{A e^{-\lambda x}}{x^{\alpha+1}}\ind\{x>0\}\,dx\,,
$$ 
where $A>0$, $\lambda>0$ and $0<\alpha<2$. The closed form of the density function of $X(t)$ is not known but the characteristic function
of the distribution of $X(t)$ can be given in a closed form see Carr et al. \cite{ca:ge:ma:yo:02}. Thus we can calculate the inverse
Fourier transform numerically to get the density function of $X(t)$. For $0<\alpha<1$ we apply Theorem \ref{main} (i) and for $1\leq\alpha<2$ we use
Theorem \ref{main} (ii) to get the formula for the supremum distribution of the process $X$.

\bibliographystyle{plainnat}

\end{document}